\newcommand{\sumlim}{\sum\limits}
\newtheorem{thm}{Theorem}[section]
\newtheorem{lem}[thm]{Lemma}
\newtheorem{cor}[thm]{Corollary}
\newtheorem{obs}[thm]{Observation}
\newtheorem{exa}[thm]{Example}
\theoremstyle{definition}
\newtheorem{defn}[thm]{Definition}
\newtheorem{prop}[thm]{Proposition}
\newtheorem{conj}[thm]{Conjecture}
\newtheorem{clm}[thm]{Claim}
\newcommand{\een}{\end{enumerate}}
\newcommand{\blem}{\begin{lem}}
\newcommand{\elem}{\end{lem}}
\newcommand{\bcl}{\begin{clm}}
\newcommand{\ecl}{\end{clm}}
\newcommand{\bthm}{\begin{thm}}
\newcommand{\ethm}{\end{thm}}
\newcommand{\bpr}{\begin{prop}}
\newcommand{\epr}{\end{prop}}
\newcommand{\bco}{\begin{cor}}
\newcommand{\eco}{\end{cor}}
\newcommand{\bcon}{\begin{conj}}
\newcommand{\econ}{\end{conj}}
\newcommand{\bde}{\begin{defn}}
\newcommand{\ede}{\end{defn}}
\newcommand{\bex}{\begin{exa}}
\newcommand{\eexa}{\end{exa}}
\newcommand{\bobs}{\begin{obs}}
\newcommand{\eobs}{\end{obs}}
\newcommand{\bexe}{\begin{exe}}
\newcommand{\eexe}{\end{exe}}
\newcommand{\grn}{G_{r,n}}
\newcommand{\exc}{{\rm exc}}
\begin{document}

\title{On the Excedance sets of colored permutations}

\author{Eli Bagno}
\address{The Jerusalem College of Technology, Jerusalem, Israel}
\email{bagnoe@jct.ac.il}

\author{David Garber}
\address{Department of Applied Mathematics, Faculty of Sciences, Holon Institute of Technology, PO Box 305,
58102 Holon, Israel} \email{garber@hit.ac.il}

\author{Robert Shwartz}
\address {Department of Mathematics, Bar-Ilan University, 52900 Ramat-Gan, Israel}
\email{shwartr1@macs.biu.ac.il}

\begin{abstract}
We define the excedence set and the excedance word on $G_{r,n}$,
generalizing a work of Ehrenborg and Steingrimsson and use the
inclusion-exclusion principle to calculate the number of colored
permutations having a prescribed excedance word. We show some
symmetric properties as Log concavity and unimodality of a specific
sequence of excedance words.
\end{abstract}

\date{\today}
\maketitle

\section{Introduction}

Let $S_n$ be the symmetric group on $n$ letters. The parameter {\it
excedance}, which is defined on a permutation $\pi \in S_n$ by
$$\exc(\pi)=|\{i\in [n] \mid \pi(i)>i \}|,$$ is well-known.
($[n]=\{1,\dots ,n\}$, as usual).

One can also define the {\it excedance set} of a permutation $\pi$:
$${\rm Exc}(\pi)=\{i \in [n] \mid \pi(i)>i\}.$$
One can encode the excedance set of a permutation $\pi$ as a word
$w_{\pi}$ of length $n-1$ in the letters $a,b$, where $a$ in the
$i$th place means that $i \not\in {\rm Exc}(\pi)$, while $b$ in the
$i$th place means that $i \in {\rm Exc}(\pi)$. For example, the
excedance set of the permutation:
$$\pi=\left( \begin{array}{ccccc} 1&2&3&4&5 \\ 3&5&1&4&2 \end{array}\right)\in S_5$$
is encoded by the word: $bbaa$.

Let $w$ be a word in $a,b$. Ehrenborg and Steingrimsson \cite{ES}
defined:
$$[w]=\#\{\pi \in S_n \mid w_{\pi} =w \}$$
They give some recursive and explicit formulas for $[w]$. Moreover,
they consider the unimodality of the series $\{ [b^k a^{n-1-k}] \mid
0 \leq k \leq n-1 \}$ as well as some log-concavity results.

\medskip

In this paper, we generalize this idea to the colored permutation
group $G_{r,n} =\mathbb{Z}_r \wr S_n$ (defined in Sections
\ref{grn}. We consider the series
$$\{ [b^k a^{rn-1-k}] \mid 0 \leq k \leq rn-1 \},$$ show its log-concavity
and conclude its unimodality (since all elements are positive) (See
Section \ref{log concave}).
 The proof is based on finding an explicit
formula for the number $[b^k a^{rn-1-k}]$. Moreover, in Section
\ref{Inc-Exc}, we supply an algorithm to compute the number of
colored permutations having a fixed excedance, based on the
inclusion-exclusion approach presented in \cite{ES}.

\section{The group of colored permutations}\label{grn}

\bde Let $r$ and $n$ be positive integers. {\it The group of colored
permutations of $n$ digits with $r$ colors} is the wreath product:
$$\grn=\mathbb{Z}_r \wr S_n=\mathbb{Z}_r^n \rtimes S_n,$$
consisting of all the pairs $(\vec{z},\tau)$ where $\vec{z}$ is an
$n$-tuple of integers between $0$ and $r-1$ and $\tau \in S_n$. The
multiplication is defined by the following rule: for
$z=(z_1,\dots,z_n)$ and $\vec{z}'=(z'_1,\dots,z'_n)$
$$(\vec{z},\tau) \cdot (\vec{z}',\tau')=((z_1+z'_{\tau^{-1}(1)},\dots,z_n+z'_{\tau^{-1}(n)}),\tau \circ \tau')$$ (here $+$ is
taken modulo $r$). \ede

Note that the symmetric group $S_n=G_{1,n}$ and the group of
signed permutations $B_n=G_{2,n}=C_2\wr S_n$ are special cases of
$G_{r,n}$.

We use some conventions along this paper. For an element\break
$\pi=(\vec{z},\tau) \in \grn$ with $\vec{z}=(z_1,\dots,z_n)$ we
write $z_i(\pi)=z_i$. For $\pi=(\vec{z},\tau)$, we denote
$|\pi|=(\vec{0},\tau), (\vec{0} \in \mathbb{Z}_r^n)$. We also
define\break $c_i(\pi)=z_i(\pi^{-1})$ and $\vec{c}=(c_1,\dots,c_n)$.
Using this notation, the element
$(\vec{c},\tau)=\left((0,1,2,3),\begin{pmatrix} 1 & 2 & 3 &4 \\
2 & 1 & 4 &3
\end{pmatrix}\right) \in
G_{3,4}$ will be written as $(2 \bar{1} \bar{\bar{4}}
\bar{\bar{\bar{3}}}).$

Here is another way to present $\grn$: Consider the alphabet
$\Sigma=\{1,\dots,n,\bar{1},\dots,\bar{n},\dots,
1^{[r-1]},\dots,n^{[r-1]} \}$ as the set $[n]$ colored by the colors
$0,\dots,r-1$. Then, an element of $\grn$ is a {\it colored
permutation}, i.e., a bijection $\pi: \Sigma \rightarrow \Sigma$
satisfying the following condition: if
$\pi(i^{[\alpha]})=j^{[\beta]}$ then
$\pi(i^{[\alpha+1]})=j^{[\beta+1]}$.

Define the {\it color order} on $\Sigma$:

$$1^{[r-1]} <\! \cdots < n^{[r-1]} < 1^{[r-2]} < 2^{[r-2]} < \cdots < n^{[r-2]} < \cdots < 1 < \cdots\! < n.$$


The {\it complete notation} of $\pi$ will be presented by an
example:
Let $\pi=(\vec{z},\tau)=\left((1,2,0),  \begin{pmatrix} 1 & 2 & 3 \\
3 & 1 & 2
\end{pmatrix}\right) \in G_{3,3}.$ The complete notation of
$\pi$ is:
$$\begin{pmatrix} \bar{\bar{1}} & \bar{\bar{2}} & \bar{\bar{3}} &
\bar{1} & \bar{2}& \bar{3} & 1 & 2 & 3\\
\bar{\bar{3}} & 1 & \bar{2} & \bar{3} & \bar{\bar{1}}  &  2 & 3 &
\bar{1} & \bar{\bar{2}}
\end{pmatrix}.$$
 We will use the complete notation throughout this paper.

\section{Statistics on $\grn$}\label{stat}

We start by defining the excedance set for colored permutations.

Let $\pi \in \grn$. Write $\pi$ in its complete notation and define:
$${\rm Exc}(\pi)= \{x \in \Sigma \mid \pi(x)>x\}.$$
Other definitions can be found in \cite{F,St}.

\medskip

We associate to $\pi$ the matrix $M(\pi)=(t_i^j)$ where $1 \leq i
\leq n$, $j$ varies from $r-1$ downto $0$ and $t_i^j \in \{a,b\}$ in
the following way:

$$ t_i^j= \left\{ \begin{array}{ll} a & \pi(i^{[j]}) \leq  i^{[j]}\\
                                     b &  \pi(i^{[j]})> i^{{[j]}} \end{array}\right.$$

\begin{exa}
Let  $$\pi=\begin{pmatrix} \bar{\bar{1}} & \bar{\bar{2}} &
\bar{\bar{3}} &
\bar{1} & \bar{2}& \bar{3} & 1 & 2 & 3\\
\bar{\bar{3}} & 1 & \bar{2} & \bar{3} & \bar{\bar{1}}  &  2 & 3 &
\bar{1} & \bar{\bar{2}}
\end{pmatrix}.$$
The associated matrix is:
$$M(\pi) = \begin{pmatrix} b&b &b \\b &a &b  \\b &a &a  \end{pmatrix}. $$

\end{exa} Note that $t_n^0=a$ always.

Let $\pi \in \grn$. The {\it excedance word} of $\pi$, $w_{\pi}$ is
the word obtained by reading the entries of $M(\pi)=(t_i^j)$ row by
row from left to right, ignoring the last place. Thus for $\pi$
defined above one has $w_{\pi}=bbbbabba$. We usually insert
separators between the rows of $M(\pi)$. In our example, we get
$w_{\pi}=(bbb|bab|ba)$.

For a word $w \in \{a,b\}^{rn-1}$, define $[w]$ to be the number of
elements $\pi \in \grn$ such that $w_{\pi}=w$.

\section{Unimodality and Log-concavity of the sequence $\{[b^ka^{rn-1-k}]\mid 0 \leq k \leq
rn-1\}$} \label{log concave} We recall the following definitions:

\bde A sequence of positive real numbers $a_0,a_1,\dots$ is
 {\it unimodal} if for some integers  $0 \leq k \leq n$, $t \geq 0$,  we have $$a_0
 \leq a_1 \leq \cdots \leq a_k  \geq \cdots \geq a_n.$$  \ede

\bde
 A sequence of real numbers $a_0,a_1,\dots$ is
\it{Log-concave} if for any $k>0$ one has: $a_{k-1} \cdot a_{k+1}
\leq a_k^2$. \ede

It is known that if the elements of the sequence are positive then
Log-concavity implies unimodality.

The following two observations are essential for the sequal.

\begin{obs}\label{obs 1}

 Let $\pi \in \grn$ and let $i \in \{1,\dots,n\}$. Then $c_i(\pi)=0$ if and only if all
 the entries of the $i$-th column of $M(\pi)$ are equal. In this case,  $t_i^{r-1}=a$ if and only if $\pi(i) \leq i$.

\end{obs}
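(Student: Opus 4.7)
The plan is to parametrize column $i$ of $M(\pi)$ by the single integer $c := c_i(\pi)$ and reduce the claim to an elementary statement about a cyclic shift of colors. First I would unwind the group action. From the multiplication rule in $\grn$ one checks that $\pi=(\vec{z},\tau)$ acts on colored letters by
$$\pi(i^{[j]}) = \tau(i)^{[j + z_{\tau(i)}]},$$
and that the color vector of the inverse satisfies $z_i(\pi^{-1}) = -z_{\tau(i)}(\pi) \bmod r$. Setting $c = c_i(\pi)$, this becomes $\pi(i^{[j]}) = \tau(i)^{[(j-c)\bmod r]}$ for every $j \in \{0,\dots,r-1\}$.

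Next I would translate the condition $t_i^j = a$ into a comparison of colors. Because a higher color index yields a smaller element, with the natural order on $[n]$ breaking ties within one color, the inequality $\pi(i^{[j]}) \le i^{[j]}$ holds if and only if either $(j-c)\bmod r > j$, or $(j-c)\bmod r = j$ and $\tau(i) \le i$. The analysis then splits on the value of $c$. If $c = 0$, then $(j-c)\bmod r = j$ for every $j$, so $t_i^j = a$ is equivalent to $\tau(i) \le i$ independently of $j$, and column $i$ is constant. Moreover $z_{\tau(i)} = 0$ in this case, so $\pi(i) = \tau(i)^{[0]} = \tau(i)$; the common value in the column is therefore $a$ iff $\pi(i) \le i$, which yields the second assertion (and in particular determines $t_i^{r-1}$). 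If instead $c \in \{1,\dots,r-1\}$, then for $j \ge c$ we have $(j-c)\bmod r = j - c < j$, forcing $t_i^j = b$, while for $j < c$ we have $(j-c)\bmod r = j - c + r > j$, forcing $t_i^j = a$; both letters appear, so the column is not constant, giving the contrapositive.

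The main obstacle is purely bookkeeping: one must carefully derive the color vector of $\pi^{-1}$ from the wreath-product multiplication rule and keep the reversed color convention (more bars $=$ smaller) firmly in mind when converting the inequality $\pi(i^{[j]}) \le i^{[j]}$ into a statement about the color indices. Once the formula $\pi(i^{[j]}) = \tau(i)^{[(j-c)\bmod r]}$ is in hand, both directions of the equivalence and the ``$t_i^{r-1}=a\Leftrightarrow\pi(i)\le i$'' clause fall out by inspection.
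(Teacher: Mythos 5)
Your proof is correct. The paper states this as an unproved observation, and your argument --- unwinding the wreath-product action to get $\pi(i^{[j]})=\tau(i)^{[(j-c)\bmod r]}$ with $c=c_i(\pi)=-z_{\tau(i)}(\pi)\bmod r$, then comparing color indices under the reversed color order --- is exactly the verification the authors leave to the reader, and all the case checks ($c=0$ gives a constant column; $1\le c\le r-1$ gives $a$'s for $j<c$ and $b$'s for $j\ge c$, hence a non-constant column) are right. For the final clause you could shortcut slightly: $t_i^{0}=a\Leftrightarrow\pi(i)\le i$ holds by definition, and constancy of the column gives $t_i^{r-1}=t_i^{0}$, so no further computation of $\pi(i)$ is needed.
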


\begin{obs}\label{obs 2}
Let $\pi \in \grn$ and let $i \in \{1,\dots,n\}$ be such that
$c_i(\pi)\neq 0$. Then the $i$-th column of $M(\pi)$ is of the
form: $t_i^{r-1}\cdots t_i^{0}=b^ua^{r-u}$ if and only if
$u=c_i(\pi)$.
\end{obs}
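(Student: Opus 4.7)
The plan is to read off the $i$-th column of $M(\pi)$ directly from the definition of $t_i^{\alpha}$, using the colored-permutation structure together with the color order on $\Sigma$. Write $c = c_i(\pi)$. Unwinding the definitions (equivalently, writing $\pi$ in the $\vec{c}$-coordinates used in Section \ref{grn}), one sees that $\pi(i^{[0]}) = j^{[c]}$ for some $j \in [n]$. The defining identity $\pi(i^{[\alpha+1]}) = j^{[\beta+1]}$ whenever $\pi(i^{[\alpha]}) = j^{[\beta]}$ then forces $\pi(i^{[\alpha]}) = j^{[c+\alpha]}$ for every $\alpha \in \{0,1,\ldots,r-1\}$, with the color exponent taken modulo $r$.

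Next I would decide when $t_i^{\alpha} = b$, i.e., when $j^{[c+\alpha]} > i^{[\alpha]}$ in the color order. Since $c \neq 0$, the two colors $c + \alpha \pmod r$ and $\alpha$ are distinct, so the image and the argument lie in different color classes. Inspection of the color order $1^{[r-1]} < \cdots < n^{[r-1]} < \cdots < 1 < \cdots < n$ shows that between distinct classes the comparison depends only on the color index: $j^{[\beta]} > i^{[\alpha]}$ iff $\beta < \alpha$, independently of $i$ and $j$ -- a smaller color index simply labels a larger element of $\Sigma$.

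The heart of the argument is then the modular step: $c + \alpha \pmod r < \alpha$ iff the integer sum $c + \alpha$ wraps around, i.e., iff $c + \alpha \geq r$, i.e., iff $\alpha \geq r - c$. Hence $t_i^{\alpha} = b$ precisely for $\alpha \in \{r-c, r-c+1, \ldots, r-1\}$ and $t_i^{\alpha} = a$ for $\alpha \in \{0, 1, \ldots, r-c-1\}$. Since the rows of $M(\pi)$ are indexed top-to-bottom by $\alpha = r-1, r-2, \ldots, 0$, the top $c$ entries of column $i$ are $b$'s and the bottom $r-c$ are $a$'s, producing the column word $b^{c}a^{r-c}$ and forcing $u = c = c_i(\pi)$; the converse direction is immediate from the same computation read backwards. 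The only real subtlety is keeping the orientation of the color order straight, so that the wrap-around modulo $r$ places the ``$b$-block'' at the top of the column rather than at the bottom.
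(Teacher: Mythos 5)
Your proof is correct. The paper states this as an Observation and offers no proof at all, so there is nothing to compare against; your verification --- reading $c_i(\pi)$ as the color attached at position $i$ (as in the paper's example $(2\,\bar{1}\,\bar{\bar{4}}\,\bar{\bar{\bar{3}}})$), propagating $\pi(i^{[\alpha]})=j^{[c+\alpha]}$ via the coloring condition, noting that for distinct colors the order on $\Sigma$ depends only on the color index, and locating the wrap-around at $\alpha\geq r-c$ to place the $b$-block in the top $c$ rows --- is exactly the computation the authors leave implicit, and it correctly yields the column $b^{c_i(\pi)}a^{r-c_i(\pi)}$.
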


\bthm

For each $0 \leq k \leq rn-1$, one has:

$$[b^ka^{rn-1-k}]= \left\{\begin{array}{ll}
(k+1)^{n-k}k! & \quad 0 \leq k \leq n \\
& \\
n! & \quad n+1 \leq k \leq n(r-1) \\
& \\
(nr-k)^{k-nr+n}(nr-k)! & \quad n(r-1)+1 \leq k \leq rn-1
\end{array} \right.$$
\ethm

\medskip

\begin{proof}

Let $w=(b^ka^{rn-1-k})$.

We start with the case $k=0$. Then $w=a^{rn-1}$. The only element
$\pi \in \grn$ having no excedances at all is the identity element
and thus $[a^{rn-1}]=1$.

For $1 \leq k \leq n$, we have
$w=(b^ka^{n-k}|a^n|a^n|\cdots|a^n|a^{n-1})$. Let $\pi$ be such that
$w_{\pi}=w$. This means that for each $1 \leq i \leq k$ we have
$c_i(\pi)=1$ by Observation \ref{obs 2}, while for $ k< i \leq n$,
$c_i(\pi)=0$ and $\pi(i) \leq i$ by Observation \ref{obs 1}. Thus
$[w]=(k+1)^{n-k}k!$. (Indeed, first fix $\pi(i)$ for $k+1 \leq i
\leq n$ and then fix the rest $k$ places). Similar computations can
be found at \cite{BGS} and \cite{ES}.

 For $n+1 \leq k \leq n(r-1)$,
by Observation \ref{obs 2}, $c_i(\pi) \neq 0$ for each $i \in
\{1,\dots ,n\}$, and thus $[w]=n!$.

If $ n(r-1)+1 \leq k \leq rn-1$, then $w=(\underbrace{b^n | \cdots|
b^n}_{r-1}|b^{k-n(r-1)}a^{rn-1-k})$. Denote $k'=k-n(r-1)$. Then for
$1 \leq i \leq k'$ one has $c_i(\pi)=0$ and $\pi(i) >i$ and for
$k'+1 \leq i \leq n$, $c_i(\pi) \neq 0$. Hence
$[w]=(n-k')^{k'}(n-k')!$.

\end{proof}



\begin{cor}
The sequence $\{[b^ka^{rn-1-k}] \mid k \in \{0, \dots ,rn-1\}\}$ is
log-concave and unimodal.
\end{cor}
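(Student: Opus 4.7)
The plan is to verify log-concavity directly and then invoke the standard fact (recorded just before the theorem) that log-concavity of a positive sequence implies unimodality. Write $a_k := [b^k a^{rn-1-k}]$. I would first record that the three-piece formula of the preceding theorem is invariant under $k \mapsto rn-1-k$: substituting $k' = rn-1-k$ into the third piece and using $(k+1)! = (k+1)\cdot k!$ recovers the first piece. This reduces the verification of $a_{k-1}a_{k+1} \le a_k^2$ to the lower half of the index range.

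Next, I would handle the interior of the first regime, $1 \le k \le n-1$, which is the only case that calls for real computation. A short manipulation yields
$$\frac{a_{k-1}a_{k+1}}{a_k^2} = \frac{k^{n-k+1}(k+2)^{n-k-1}(k-1)!\,(k+1)!}{(k+1)^{2(n-k)}(k!)^2} = \frac{k}{k+1}\left(\frac{k(k+2)}{(k+1)^2}\right)^{n-k-1},$$
and since both factors on the right lie in $(0,1)$, the ratio is strictly less than $1$. The middle regime $n+1 \le k \le n(r-1)$ is trivial because $a_k = n!$ is constant there, and the interior of the third regime follows from the symmetry recorded above.

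All that remains is to inspect the boundaries where the piecewise formula switches. At $k = n$ one has $a_{n-1} = a_n = n!$, and $a_{n+1}$ equals $n!$ when $r \ge 3$ (the middle regime is nonempty) and equals $(n-1)\cdot(n-1)!$ when $r = 2$ (the third regime starts immediately); in both sub-cases the inequality $a_{n-1}a_{n+1} \le a_n^2$ reduces either to equality or to $n-1 \le n$. The mirror boundary at $k = n(r-1)+1$ is handled by symmetry. The main (and essentially only) obstacle is the algebraic simplification of the ratio in the first regime; once it is expressed as a product of quantities in $(0,1)$, the bound is immediate, and every other case collapses to a trivial check.
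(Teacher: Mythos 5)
Your proof is correct, but it takes a genuinely different---and in fact more complete---route than the paper's. The paper invokes the excedance symmetry of \cite{BGS} to get $a_k=a_{rn-1-k}$ and then asserts that it suffices to check that the sequence is increasing on $\{0,\dots,n\}$, ``but this is trivial.'' That argument does establish unimodality (a positive symmetric sequence that increases up to the middle is unimodal), but symmetry plus monotonicity does not by itself give log-concavity (the sequence $1,2,8,2,1$ is symmetric, positive and unimodal but $1\cdot 8>2^2$), so the paper's justification of the log-concavity half is essentially a gap that your computation fills. You instead verify the defining inequality $a_{k-1}a_{k+1}\le a_k^2$ directly from the explicit three-piece formula: the reduction to the lower half via the formula's invariance under $k\mapsto rn-1-k$ checks out, your simplification of the ratio to $\frac{k}{k+1}\bigl(\frac{k(k+2)}{(k+1)^2}\bigr)^{n-k-1}$ is correct and each factor lies in $(0,1)$, the constant middle regime is trivial, and your boundary analysis at $k=n$ (with the $r=2$ versus $r\ge 3$ case split giving $(n-1)\cdot(n-1)!\le n\cdot(n-1)!$ or exact equality) is exactly the check the paper skips. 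The only blemish is a labelling slip: under $k\mapsto rn-1-k$ the mirror of the boundary condition at $k=n$ is the condition at $k=n(r-1)-1$, not at $k=n(r-1)+1$ (the latter is the mirror of $k=n-2$); since every remaining index is still covered by the cases you treat, this does not affect correctness. Net effect: your argument actually proves log-concavity, at the cost of an explicit computation that the paper's shorter symmetry argument avoids but, as written, genuinely needs.
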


\begin{proof}
By the symmetry property of the excedance, appearing in \cite{BGS}
one has for each $k \in \{0,\dots, rn-1\}$,
$[a^kb^{rn-1-k}]=[b^ka^{rn-1-k}]$, hence it is sufficient to prove
that the sequence $[b^ka^{rn-1-k}]$ is increasing for $k \in
\{0,\dots n\}$, but this is trivial.

Since the sequence is positive, the sequence is unimodal too.
\end{proof}

\section{A direct computation for $[w]$ using
inclusion-exclusion}\label{Inc-Exc}

Recall that for each $\pi \in \grn$, the word $w_{\pi}$ is actually
a monomial in $a,b$. By $a+b$ in the location $i$ of $w_{\pi}$ we
mean that we do not care if $i$ is an excedance of $\pi$ or not. In
this way, we can speak about expressions $w \in \mathbb{Z}[a,b]$.

For each word $u=u_1 \cdots u_t$, define $E(u)=\{i \mid u_i=b\}$.
Let $(n_1,\dots,n_{k+1}) \in \mathbb{N}^{k+1}$ and define
$w=a^{n_1}\cdot (a+b) \cdot a^{n_2} \cdot (a+b) \cdots (a+b) \cdot
a^{n_k+1}$. Then one has \begin{small}$$\{\pi \in \grn \mid
w_{\pi}=a^{n_1}\cdot (a+b) \cdot a^{n_2} \cdot (a+b) \cdots (a+b)
\cdot a^{n_k+1}\}=\{\pi \in \grn \mid E(w_\pi) \subseteq
E(u)\}$$\end{small} where $u=a^{n_1}ba^{n_2}ba^{n_3}b \cdots
a^{n_k}ba^{n_k+1}$.

Ehrenborg and Steingrimsson \cite{ES} computed a formula for the
number of permutations of $S_n$ having a given excedance set using
the inclusion-exclusion principle. Their main results are the
following:

\blem [Ehrenborg-Steingrimsson] \label{St lemma}

For any vector $(n_1,\dots ,n_{k+1}) \in \mathbb{N}^{k+1}$ one has:

$$[a^{n_1}\cdot (a+b)\cdot a^{n_2} \cdot (a+b) \cdots (a+b) \cdot
a^{n_{k+1}}]=1^{n_1+1}\cdot 2^{n_2+1} \cdots (k+1)^{n_{k+1}+1}.$$

 \elem

\bthm [Ehrenborg-Steingrimsson] \label{St}

Let $w=a^{n_1}ba^{n_2}b \cdots a^{n_k}ba^{n_{k+1}}$ be an ab-word
with exactly $k$ b's. Then

$$[w]=\sumlim_{{\bf{r}} \in R_k}(-1)^{h({\bf r})}\cdot {\bf r}^{{\bf
n}(w)+{\bf 1}}
$$
where $$R_k=\{{\bf r}=(r_1,\dots,r_{k+1}) \mid r_1=1,r_{i+1}-r_i \in
\{0,1\}\},$$  $$h({\bf r})=|\{i \in \{1,\dots ,k \} \mid
r_i=r_{i+1}\}|,$$ ${\bf n}(w)=(n_1,\dots ,n_{k+1})$ and ${\bf
1}=(1,1,\dots ,1)$.  \ethm

 In this section we present a similar formula for excedance words
in $\grn$, based on Theorem \ref{St}.

\medskip

Define $$\Psi: M_{r,n}(\{a,b\}) \rightarrow \{a,b,a+b\}^{n-1}$$ by
$M=(t_i^j)   \mapsto w'=(w_1' \cdots w_{n-1}')$ where
$$w_i'=\left\{ \begin{array}{cc} a & \qquad t_i^j=a, \forall j \in
\{0,\dots,r-1\}\\
b & \qquad t_i^j=b, \forall j \in \{0,\dots, r-1\}\\
a+b & \qquad \mbox{Otherwise}
\end{array}\right.$$

\begin{exa}
Let
$$M= \begin{pmatrix} a&  b&b &b \\a & b &a &b  \\a & b &a &a  \end{pmatrix}
$$ which can be also written as a word: $w=(abbb|abab|aba)$.
Then $\Psi(w)=(a \quad b\quad a+b)$.
\end{exa}

Now, for each $M \in M_{r,n}(\{a,b\})$ which comes from some $\pi
\in \grn$ (or equivalently, for the associated word $w$) define
another map:
$$\varphi:\{\pi \in \grn \mid w_{\pi}=w\} \rightarrow \{\pi \in S_n
\mid w_{\pi}=\Psi(w)\}$$ by $\varphi(\pi)=|\pi|$.
 It can be easily
shown that $\varphi$ is a bijection. Thus, instead of computing
$[w]$, we compute $[\Psi(w)]$.

In order to compute $[\Psi(w)]$, following \cite{ES}, we write
each $b$ in $w'=\Psi(w)$ as $b=(a+b)-a$ and use the distributivity
of the ring $\mathbb{Z}[a,b]$ to expand $w'$ into a sum of
monomials in $a$ and $a+b$. Then we can use Lemma \ref{St lemma}
to calculate $[w']$.

We first illustrate this idea by the following example:

\begin{exa}
Let $w$ be such that
$$w'=\Psi(w)=(a \quad b \quad a+b \quad a \quad a \quad a+b \quad b \quad a).$$
Then:
\begin{eqnarray*}
[w']&=&[a \quad \mathbf{b} \quad a+b \quad a \quad a \quad a+b \quad
\mathbf{b} \quad
a]\\
& & \\
&=& [a \quad \mathbf{(a+b)-a} \quad a+b \quad a \quad a \quad a+b
\quad
\mathbf{(a+b)-a} \quad a]=\\
& & \\
&=& [a \quad \mathbf{a+b} \quad a+b \quad a \quad a \quad a+b \quad
\mathbf{a+b} \quad a]-\\
& & \\
& & - \quad [a \quad \mathbf{a+b} \quad a+b \quad a \quad a \quad
a+b \quad
\mathbf{a} \quad a]-\\
& & \\
& & - \quad [a \quad \mathbf{a} \quad a+b \quad a \quad a \quad a+b
\quad
\mathbf{a+b} \quad a]+ \\
& & \\
& & + \quad [a \quad \mathbf{a} \quad a+b \quad a \quad a \quad a+b
\quad \mathbf{a} \quad
a]=\\
& & \\
&=& [a^1 \quad a+b \quad a^0 \quad  a+b \quad a^2 \quad a+b \quad
a^0 \quad a+b \quad a^1]-\\
& & \\
& & - \quad [a^1 \quad a+b \quad a^0 \quad a+b \quad a^2 \quad a+b
\quad a^2]-\\
& & \\
& & - \quad [a^2 \quad a+b \quad a^2 \quad a+b \quad a^0 \quad
a+b \quad a^1]+ \\
& & \\
& & + \quad [a^2 \quad a+b \quad a^2 \quad a+b \quad a^2]=\\
& & \\
&=& 1^2 \cdot 2^1 \cdot 3^3 \cdot 4^1 \cdot 5^2  - 1^2 \cdot 2^1
\cdot 3^3 \cdot 4^3 - 1^3 \cdot 2^3 \cdot 3^1 \cdot 4^2 + 1^3 \cdot
2^3 \cdot 3^3=\\
& & \\
&=& 1^2 \cdot 2^1 \cdot 3^3 \cdot 4^1 \cdot 5^2  - 1^2 \cdot 2^1
\cdot 3^3 \cdot 4^1 \cdot 4^2 - \\
& & -1^2 \cdot 1^1 \cdot 2^3 \cdot 3^1 \cdot 4^2 + 1^2 \cdot 1^1
\cdot 2^3 \cdot 3^1 \cdot 3^2 = 1776
\end{eqnarray*}

\end{exa}

In general, if $\Psi(w)=(w_1' \cdots w_{n-1}')$ then define
$$I(w)=\{i \in \{1,\dots,n-1\} \mid w_i'=a+b\}$$ and let $k$ be
the number of letters in $\Psi(w)$ which are not $a$. In order to
separate between the digit $a$ and the other digits appearing in
$\Psi(w)$, we write $\Psi(w)=(a^{n_1} x_1 a^{n_2}x_2 \cdots x_k
a^{n_{k+1}})$ where $x_i \in \{a+b,b\}$.

Define: $$R_k=\left\{ {\bf r}=(r_1,\dots,r_{k+1}) \left|
\begin{array}{l} r_1=1,r_{i+1}-r_i \in \{0,1\}, \\ r_{i+1}-r_i=1 \mbox{ if } i \in
I(w) \end{array} \right. \right\},$$ and $h({\bf r})=|\{i \in
\{1,\dots ,k \} \mid r_i=r_{i+1}\}|$, ${\bf n}(w)=(n_1,\dots
,n_{k+1})$ and ${\bf 1}=(1,1,\dots ,1)$.

In the previous example, we have $k=4$ and $n(w)=(1,0,2,0,1)$.
$I(w)=\{2,3\}$, so :
$$R_k=\{(1,2,3,4,5),(1,2,3,4,4),(1,1,2,3,4),(1,1,2,3,3)\}$$

\bthm \label{our St}

Let $M \in M_{r,n}(\{a,b\})$ and let $w$ be the associated word.
Then
$$[w]=[\Psi(w)]=\sumlim_{{\bf{r}} \in R_k}(-1)^{h({\bf r})}\cdot {\bf r}^{{\bf
n}(w)+{\bf 1}} .$$
  \ethm

\begin{proof}

When we substitute $b=(a+b)-a$ in $\Psi(w)$ and expand it into a sum
of monomials in $\{a,a+b\}$, we get $2^p$ monomials where $p$ is the
number of appearances of $b$ in $\Psi(w)$. Each monomial is obtained
by substituting $a$ or $a+b$ in each instance of $b$ in $\Psi(w)$.

We can index these monomials in the following way:  Let $m=(a^{n_1}
y_1 a^{n_2} \cdots y_k a^{n_{k+1}})$ be a monomial appearing in the
expansion of $\Psi(w)$, where $y_i \in \{a,a+b\}$, $y_i=x_i$ if
$x_i=a+b$. Define $\mathbf{r}=(r_1,\dots ,r_{k+1}) \in R_k$ by:
$r_1=1$ and:
$$r_{i+1} = \left\{\begin{array}{cc}
r_i & y_i=a\\
r_i+1 & y_i=a+b \end{array}\right.$$

This gives us the $2^p$ vectors of $R_k$. Each vector can be seen as
a lattice walk of length $k$ where $r_{i+1}-r_i=1$ corresponds to a
vertical step while $r_{i+1}=r_i$ corresponds to a horizontal step.
In this view, $h(\bf{r})$ is just the number of horizontal steps
which is also the number of appearances of $a$ in $m$. The sign of
each monomial depends on the parity of $h(\mathbf{r})$ and by Lemma
\ref{St lemma} each monomial contributes
$\mathbf{r}^{\mathbf{n}(w)+\mathbf{1}}$ to $[w]$.

\end{proof}


\begin{thebibliography}{Sta}


\bibitem{BGS} E.\ Bagno, D.\ Garber and R.\ Shwartz, {\it Recursions for excedance number in some permutations
groups}, preprint (arXiv: math.CO/0702452).

\bibitem{ES}
R. Ehrenborg and E. Steingrimsson, {\it The excedance set of a
permutation}, Adv. Appl. Math. {\bf 24} (2000), 284--299.

\bibitem{F} M. Fire, {\it Statistics on wreath products}, preprint (arXiv:
math.CO/0409421).



\bibitem{St}
E. Steingrimsson, {\it Permutation statistics of indexed
permutations}, Europ. J. Combin. {\bf 15} (1994), 187--205.

\end{thebibliography}
\end{document}